\newtheorem{thrm}{Theorem}[section]
\newtheorem{cor}[thrm]{Corollary}
\newtheorem{lemma}[thrm]{Lemma}
\theoremstyle{definition}
\newtheorem{defin}[thrm]{Definition}
\newtheorem*{xrem}{Remark}
\numberwithin{equation}{section}
\DeclareMathOperator{\supp}{supp}
\newcommand{\NN}{\mathbb{N}}
\newcommand{\RR}{\mathbb{R}}
\begin{document}


\baselineskip=17pt



\title[On functional tightness]{On functional tightness of infinite products}

\author[M. Krupski]{Miko\l aj Krupski}
\address{Department of Mathematics\\ University of Pittsburgh\\
Pittsburgh, PA 15260, USA\\
and \\
Institute of Mathematics\\ University of Warsaw\\ ul. Banacha 2\\
02--097 Warszawa, Poland }
\email{m.krupski@pitt.edu}

\begin{abstract}
A classical theorem of Malykhin says that if $\{X_\alpha:\alpha\leq\kappa\}$ is a
family of compact spaces such that $t(X_\alpha)\leq \kappa$, for every $\alpha\leq\kappa$, then
$t\left( \prod_{\alpha\leq \kappa} X_\alpha \right)\leq \kappa$, where $t(X)$ is the tightness of a space $X$.
In this paper we
prove the following counterpart of Malykhin's theorem for functional tightness: 
Let $\{X_\alpha:\alpha<\lambda\}$ be a family of compact spaces
such that $t_0(X_\alpha)\leq \kappa$ for every $\alpha<\lambda$. If $\lambda \leq 2^\kappa$ or $\lambda$ is less than the first measurable cardinal,
then $t_0\left( \prod_{\alpha<\lambda} X_\alpha \right)\leq \kappa$, where $t_0(X)$ is the functional tightness
of a space $X$. In particular, if there are no measurable cardinals, then the functional tightness is preserved by arbitrarily large products of compacta. Our
result answers a question posed by Okunev.
\end{abstract}

\subjclass[2010]{Primary 54B10, 54A25, 54C08}

\keywords{Functional tightness, Minitightness, Products}

\maketitle

\section{Introduction}
One of the important cardinal invariants of a topological space $X$ is its tightness $t(X)$\
\footnote{\textit{The tightness} $t(X)$ of a space $X$ is the minimal infinite cardinal number $\kappa$ such that
for any $A\subseteq X$ and any $x\in \overline{A}$ there is $C\subseteq A$ with $|C|\leq \kappa$ and $x\in \overline C$}.
In this paper we will be interested in the following well-known
modification of the tightness arising naturally from the theory of function spaces: Recall that
a function $f:X\to Z$ is
\textit{$\kappa$-continuous} if its restriction to any subset of $X$ of cardinality $\leq\kappa$ is continuous.
By $t_0(X)$ we denote the \textit{functional tightness} of $X$, i.e. the minimal infinite cardinal number $\kappa$ such that any
$\kappa$-continuous
real-valued function on $X$
is continuous.

A classical theorem of V.I.~Malykhin \cite{M}
(cf. \cite[2.3.3]{A1}, \cite[5.9]{J}) asserts that the tightness behaves nicely under infinite Cartesian products of compact spaces.
The purpose of the present note is to prove the following counterpart of Malykhin's theorem for functional tightness:
\begin{thrm}\label{main}
Let $\kappa$ be an infinite cardinal and let $\{X_\alpha:\alpha<2^\kappa\}$ be a family of compact spaces such that $t_0(X_\alpha)\leq\kappa$, for every
$\alpha<2^\kappa$.
Then $t_0\left(\prod_{\alpha<2^\kappa} X_\alpha\right)\leq\kappa$.
\end{thrm}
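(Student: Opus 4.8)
The plan is to take an arbitrary $\kappa$-continuous function $f\colon X\to\RR$ on $X=\prod_{\alpha<2^\kappa}X_\alpha$ and show that it is continuous; since $X$ is compact it suffices to verify continuity at each point $p\in X$. First I would isolate the real difficulty. If $f$ were already known to be continuous, the classical theorem that a continuous real-valued function on a product of compacta depends on only countably many coordinates would let me replace $X$ by a countable subproduct, where $t_0\le\kappa$ is immediate; so the whole content lies in \emph{upgrading} $\kappa$-continuity to continuity, not in the number of coordinates as such. Accordingly I would argue by contradiction: assuming $f$ is discontinuous at $p$, fix $\varepsilon>0$ so that every basic neighbourhood of $p$ contains a point $q$ with $|f(q)-f(p)|\ge\varepsilon$, and try to manufacture from these ``bad'' points a single set $C$ with $|C|\le\kappa$ such that $p\in\overline C$ while every neighbourhood of $p$ still meets $C$ in a bad point; then $f|_{C\cup\{p\}}$ would be discontinuous, contradicting $\kappa$-continuity.

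The mechanism I would use to keep $|C|\le\kappa$ is a reflection argument. Choose an elementary submodel $M\prec H(\theta)$ with $|M|=2^\kappa$, closed under $\kappa$-sequences (${}^{\kappa}M\subseteq M$, which is possible because $(2^\kappa)^\kappa=2^\kappa$), and containing $X$, $f$, $p$, the sequence $\langle X_\alpha\rangle$ and the ordinal $2^\kappa$; the last requirement forces every coordinate $\alpha<2^\kappa$ to belong to $M$. The point of the $\kappa$-closure is that any candidate witnessing set of size $\le\kappa$ that I build step by step, together with the countably-many-coordinate data attached to its points, is an element of $M$, so that elementarity and the factorwise hypothesis $t_0(X_\alpha)\le\kappa$ can be invoked \emph{inside} $M$ to repair the selection. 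Concretely, I would build $C$ by a recursion of length $\kappa$ in which, at each stage, compactness of $X$ is used to extract a cluster point of the bad points chosen so far, and the functional tightness of the individual factors is used to pin the relevant discontinuity down to coordinates already captured by $M$, so that the recursion closes off at a set of size $\le\kappa$ lying in $M$.

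The main obstacle is exactly the step where $\kappa$-continuity must be converted into a genuine contradiction: unlike full continuity, $\kappa$-continuity gives no a priori bound on the coordinates that influence $f$, so I cannot simply factor $f$ through a small subproduct at the outset. The delicate part is to show that the failure of continuity at $p$, which is detected only through the finitely supported basic neighbourhoods of the huge product, can be localized to a $\le\kappa$-sized family of coordinates on which the factorwise bound $t_0(X_\alpha)\le\kappa$ bites. This localization is where compactness of $\prod_{\alpha<2^\kappa} X_\alpha$, the hypothesis $t_0(X_\alpha)\le\kappa$, and the cardinal inequality for $2^\kappa$ must be used together: $2^\kappa$ is precisely the largest size for which a $\kappa$-closed elementary submodel of that cardinality can still contain all $2^\kappa$ coordinates, and it is the breakdown of such a selection---through the appearance of $\kappa$-additive measures---that I expect to explain why the argument stops at the first measurable cardinal, matching the dichotomy announced in the abstract.
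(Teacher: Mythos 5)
There is a genuine gap, and it sits at the heart of your plan. You propose to convert discontinuity of $f$ at $p$ into a set $C$ of ``bad'' points with $|C|\leq\kappa$ and $p\in\overline{C}$, so that $f\upharpoonright(C\cup\{p\})$ is discontinuous. That is a tightness-type reflection, and it cannot work in this generality: the tightness of $\prod_{\alpha<2^\kappa}X_\alpha$ can be as large as $2^\kappa$ (already for the Cantor cube $\{0,1\}^{2^\kappa}$), so nothing guarantees that the bad points accumulate at $p$ through a subset of size $\leq\kappa$ --- indeed, the entire content of the theorem is that functional tightness can be far smaller than tightness, so a proof must locate its small witnessing set somewhere other than at the original point of discontinuity. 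Your $\kappa$-length recursion does not supply the missing mechanism: when compactness hands you a cluster point $x$ of previously chosen bad points, $\kappa$-continuity by itself gives you no control over $f(x)$, and certifying the value of $f$ at such a cluster point is precisely the hard step your sketch leaves open. Nor can you ``pin the discontinuity down to $\leq\kappa$ coordinates'': a merely $\kappa$-continuous function need not depend on a small set of coordinates (the two-valued measure on $2^\lambda$, $\lambda$ measurable, is $\omega$-continuous yet sensitive to all coordinates), and your proposed role for the bound $2^\kappa$ --- the closure arithmetic $(2^\kappa)^\kappa=2^\kappa$ for a $\kappa$-closed elementary submodel --- is not where that hypothesis actually bites.

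For comparison, the paper routes around exactly this obstruction using Noble's theorem: a $\sigma$-continuous and $2$-continuous map into a regular space is continuous. This splits the problem in two. The $2$-continuity is where $\lambda\leq 2^\kappa$ enters, via the Hewitt--Marczewski--Pondiczery theorem: any subproduct $\prod_{\alpha<2^\kappa}Y_\alpha$ with $1\leq|Y_\alpha|\leq 2$ has density $\leq\kappa$, and $t_0\leq d$ finishes it (and it is the replacement of this step by Uspenskii's theorem on Cantor cubes, not any submodel breakdown, that produces the measurable-cardinal dichotomy). The $\sigma$-continuity is where $t_0(X_\alpha)\leq\kappa$ and compactness enter, through Okunev's theorem on finite products: assuming $f$ discontinuous on a $\sigma$-product at $y$, one builds bad points $y^n$ in shrinking basic neighborhoods of $y$ together with auxiliary points $x^n$ supported on the growing finite sets $A^n$; continuity of $f$ on each finite subproduct $K_{A^n}$ (this is where the factorwise hypothesis is used, and it is absent from your sketch) forces $f(x^n)\in V$, compactness yields a complete accumulation point $x$ of $\{y^n\}$, and applying $\kappa$-continuity to the \emph{two} countable sets $\{x^n\}\cup\{x\}$ and $\{y^n\}\cup\{x\}$ pins down $f(x)\in U$ while every neighborhood of $x$ contains points $y^n$ with $f(y^n)\notin U$ --- the contradiction. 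This double-sequence device, locating the discontinuity at the new point $x$ rather than at $y$ (or your $p$), is the key idea your proposal is missing.
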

This answers a question posed recently by O.\ Okunev (see \cite[Question 3.2]{O}).
In fact, we establish a more general result (cf. Corollary \ref{main1} below), which in particular asserts that if there are no measurable cardinals, then the
functional tightness is preserved by arbitrarily large products of compact spaces.

The functional tightness $t_0(X)$ of a space $X$ is an 
interesting cardinal invariant which is related to both the tightness $t(X)$
and the density character $d(X)$ (i.e. the minimal cardinality of a dense subspace in $X$) of $X$.
While the tightness $t(X)$ measures the minimal cardinality of sets required to determine the topology of $X$, the functional tightness
measures the minimal size of sets required to guarantee the continuity of real-valued functions on $X$.
It is easy to see that always $t_0(X)\leq t(X)$, but in fact the following holds:
$t(X)=\sup\{t_0(Y):Y\subseteq X\}$, for every space $X$ (see \cite[Theorem 1]{A}). On the other hand, the functional tightness is not monotonic
with respect to
closed subspaces which is in a sharp contrast with the behavior of the tightness \cite[Example 2]{A}.

The main motivation for studying functional tightness comes from the theory of function spaces where it appears naturally in
the context of realcompactness (see \cite{A} \cite{U}, \cite{Corson}, \cite{Pl1}, \cite{Pl}).
Though the notion was introduced by A.V.\ Arhangel'skii \cite{A} in his study of the Hewitt number of $C_p(X)$-spaces, the concept of
countable functional tightness
appeared much earlier in a paper of H.H.\ Corson \cite{Corson}
(in the context of realcompactness of Banach spaces in the weak topology).

The question whether there is a counterpart of Malykhin's theorem for functional tightness was asked recently by O.\ Okunev \cite[Question 3.2]{O},
who studied the behavior of (weak) functional tightness under finite products (see \cite{O}). Our approach relies on his result and a theorem of N.\ Noble
concerning the continuity of functions on Cartesian products
(see Corollary \ref{Okunev} and Theorem \ref{Noble} below).

\section{Notation and auxiliary facts}

All spaces under consideration are Hausdorff.

\subsection{Functional tightness and minitightness}
Let us recall definitions of functional tightness and a related notion of minitightness, and their basic properties. To this end, it will
be convenient to introduce the following useful terminology:
\begin{defin}\label{def.kappa-cont}
Let $X$ and $Z$ be topological spaces and let $\kappa\geq \omega$ be a cardinal number. A function $f:X\to Z$ is called:
\begin{itemize}
\item \textit{$\kappa$-continuous} if its restriction to any subset of $X$ of cardinality $\leq\kappa$ is continuous.
\item \textit{strictly $\kappa$-continuous} if for any subset $A\subseteq X$ of cardinality $\leq\kappa$, there is a continuous function $\hat{f}:X\to Z$ such that
 $\hat{f}\upharpoonright A=f\upharpoonright A$.
\end{itemize}
\end{defin}
Of course, strict $\kappa$-continuity implies $\kappa$-continuity. It is also not difficult to show that both notions coincide if $X$ is a normal space and
$Z=\RR$ (cf. \cite[Theorem 3]{A}).

\medskip

The \textit{functional tightness} of a space $X$ is the cardinal number
 $$t_0(X)=\min\{\kappa\geq \omega:\text{ every $\kappa$-continuous real-valued function on $X$ is continuous}\}.$$
Similarly, the \textit{minitightness}\footnote{Also called weak functional tightness, $\mathbb{R}$-tightness \cite[p. 59]{Arh} or modified functional tightness
\cite{U}}
of a space $X$ is the cardinal number
 $$t_m(X)=\min\{\kappa\geq \omega:\text{ every strictly $\kappa$-continuous real-valued function on $X$ is continuous}\}.$$
Obviously, for any space $X$ we have $t_m(X)\leq t_0(X)$. It is also easy to prove that for any space $X$, we have the inequality
$t_0(X)\leq\min\{t(X),d(X)\}$, where $t(X)$ is
the tightness and $d(X)$ is the density character of $X$ (cf. \cite[Corollary 1]{A}).
Since, as we have already mentioned, the notions of $\kappa$-continuity and strict $\kappa$-continuity of real-valued functions coincide
for normal spaces, we have
$t_m(X)=t_0(X)$ provided $X$ is a normal space (cf. \cite[Theorem 3]{A}).

\subsection{Products and $\sigma$-products}

Let $\{X_\alpha:\alpha\in A\}$ be a family of topological spaces and let $a=(a_\alpha)_{\alpha\in A}$ be a point in the product $X=\prod_{\alpha\in A}X_\alpha$.
By $\sigma(X,a)$ we denote the space
$$\left\{ x\in \prod_{\alpha\in A} X_\alpha : \{\alpha\in A: x_\alpha \neq a_\alpha\} \text{ is finite} \right\}.$$ 
A space of the form $\sigma(X,a)$ is called a \textit{$\sigma$-product}\footnote{In \cite{N} $\sigma$-products are called $\Sigma^0$-products
(cf. Definition \ref{sigma-cont-def} and Theorem \ref{Noble}).
It seems, however, that the term ``$\sigma$-product'' is more common in the literature.}.

If $x\in \sigma(X,a)$, we define the support of $x$ as
$$\supp(x)=\{\alpha\in A:x_\alpha\neq a_\alpha\}.$$
Note that by definition of $\sigma$-product, the set $\supp(x)$ is finite for any $x\in \sigma(X,a)$.

\begin{defin}\label{sigma-cont-def} (cf. \cite[p. 188]{N})
A function $f:\prod_{\alpha\in A} X_\alpha\to Z$ is called:
\begin{itemize}
 \item \textit{$2$-continuous} if it is continuous when restricted to any subset of the form $\prod_{\alpha\in A}Y_\alpha$, where $1\leq|Y_\alpha|\leq 2$
 for any $\alpha\in A$;
 \item \textit{$\sigma$-continuous} if its restriction to any $\sigma$-product in $\prod_{\alpha\in A} X_\alpha$ is continuous.
\end{itemize}
\end{defin}

Concerning the continuity of functions on Cartesian products, the following useful result was proved by Noble: 

\begin{thrm}\cite[Theorem 1.1]{N}\label{Noble}
Let $Z$ be a regular space. If $f:\prod_{\alpha\in A} X_\alpha\to Z$ is $\sigma$-continuous and $2$-continuous, then $f$ is continuous.  
\end{thrm}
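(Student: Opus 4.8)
The plan is to prove continuity of $f$ at each point separately, so fix $x=(x_\alpha)\in\prod_{\alpha\in A}X_\alpha$ and a neighborhood $V$ of $f(x)$ in $Z$. Since $Z$ is regular, I would first choose an open set $W$ with $f(x)\in W\subseteq\overline{W}\subseteq V$. The whole argument will produce a basic neighborhood of $x$ whose image lands in $\overline{W}$, and regularity is exactly what lets me upgrade this to containment in $V$.

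Next I would exploit $\sigma$-continuity with base point $a=x$. Writing $\Sigma=\sigma(X,x)$, the point $x$ has empty support and hence lies in $\Sigma$, and $f\upharpoonright\Sigma$ is continuous by hypothesis. Continuity of $f\upharpoonright\Sigma$ at $x$ yields a basic open box $U=\prod_{\alpha\in A}U_\alpha$ in $X$ (with $x_\alpha\in U_\alpha$ and $U_\alpha=X_\alpha$ off some finite set $F$) such that $f(U\cap\Sigma)\subseteq W$. The claim is then that this same $U$ already works for the full product, i.e.\ that $f(U)\subseteq V$.

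The heart of the argument is a two-point-box trick invoking $2$-continuity. Given an arbitrary $y\in U$, put $Y_\alpha=\{x_\alpha,y_\alpha\}$, so that $1\leq|Y_\alpha|\leq 2$ and $f$ restricted to the subproduct $P=\prod_{\alpha\in A}Y_\alpha$ is continuous by $2$-continuity. I would check that $P\subseteq U$: for $\alpha\in F$ both $x_\alpha$ and $y_\alpha$ lie in $U_\alpha$ (the latter because $y\in U$), and for $\alpha\notin F$ we have $U_\alpha=X_\alpha$. Since each $X_\alpha$ is Hausdorff, each $Y_\alpha$ is discrete, so $P$ is a product of one- and two-point discrete spaces; consequently the points of $P$ that agree with $x$ in all but finitely many coordinates are dense in $P$. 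These finitely supported points lie in $\Sigma\cap U$, so $f$ sends them into $W$. As $y$ belongs to the closure of this dense set in $P$ and $f\upharpoonright P$ is continuous, $f(y)\in\overline{W}\subseteq V$.

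Since $y\in U$ was arbitrary this gives $f(U)\subseteq V$, establishing continuity of $f$ at $x$, and as $x$ was arbitrary, $f$ is continuous. The step I expect to require the most care is verifying that the finitely supported points are genuinely dense inside the two-point box $P$ while simultaneously lying in $U\cap\Sigma$; once that bookkeeping is in place, the rest is a direct combination of the two continuity hypotheses with the single use of regularity.
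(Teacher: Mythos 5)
The paper cites this result from Noble's article without reproducing a proof, so there is no internal proof to compare against; your argument is correct, self-contained, and is essentially the standard (Noble's) argument: one use of regularity to replace $V$ by $\overline{W}$, $\sigma$-continuity applied to the $\sigma$-product based at the point $x$ itself to get a basic box $U$ with $f(U\cap\sigma(X,x))\subseteq W$, and the two-point box $P=\prod_{\alpha\in A}\{x_\alpha,y_\alpha\}$ on which $2$-continuity applies. Every step checks out: $P\subseteq U$ since $x,y\in U$ and $U_\alpha=X_\alpha$ off the finite set $F$; the finitely supported points of $P$ form $\sigma(P,x)$, which is dense in $P$ (this needs only that each factor is nonempty, so the remark about discreteness, while true under the paper's standing Hausdorff assumption, is not actually needed); these points lie in $U\cap\sigma(X,x)$, hence map into $W$; and continuity of $f\upharpoonright P$ then gives $f(y)\in f\bigl(\overline{\sigma(P,x)}\cap P\bigr)\subseteq\overline{f(\sigma(P,x))}\subseteq\overline{W}\subseteq V$.
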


\subsection{Minitightness of finite products}

It was shown by Okunev in \cite{O} that minitightness behaves nicely under finite products of compact spaces.
We have:

\begin{thrm}\cite[Theorem 2.14]{O}
If $X$ is locally compact, then for every Tychonoff space $Y$ we have $t_m(X\times Y)\leq t_m(X)t_m(Y)$. 
\end{thrm}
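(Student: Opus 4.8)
The plan is to show that every strictly $\kappa$-continuous real-valued function on $X\times Y$ is continuous, where I abbreviate $\kappa_X=t_m(X)$, $\kappa_Y=t_m(Y)$ and $\kappa=\kappa_X\kappa_Y$. It suffices to prove continuity at an arbitrary point $(x_0,y_0)$. First I would use local compactness of $X$ to choose a compact neighborhood $K$ of $x_0$. Since a continuous function on $X\times Y$ matching $f$ on a set $A\subseteq K\times Y$ restricts to a continuous function on $K\times Y$ matching $f|_{K\times Y}$ on $A$, the restriction $f|_{K\times Y}$ is again strictly $\kappa$-continuous, and because $x_0\in\operatorname{int}K$, continuity of $f$ at $(x_0,y_0)$ is equivalent to continuity of $f|_{K\times Y}$ there. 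Thus I reduce to a compact first factor $K$.

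Next I would establish separate continuity, and this is where both factor invariants enter. For fixed $x$, given $B\subseteq Y$ with $|B|\le\kappa_Y\le\kappa$, the set $\{x\}\times B$ has cardinality $\le\kappa$, so strict $\kappa$-continuity yields a continuous $\hat f$ agreeing with $f$ on it; then $y\mapsto\hat f(x,y)$ witnesses that the section $f(x,\cdot)$ is strictly $\kappa_Y$-continuous, hence continuous by $t_m(Y)\le\kappa_Y$. The symmetric argument with $\{x'\in X\}\times\{y\}$ of size $\le\kappa_X$ shows every section $f(\cdot,y)$ is continuous via $t_m(X)\le\kappa_X$. With separate continuity in hand, I pass to the oscillation function $h\colon Y\to\RR$, $h(y)=\sup_{x\in K}|f(x,y)-f(x,y_0)|$, which is well defined because each section $f(\cdot,y)$ is continuous on the compact set $K$. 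Note $h(y_0)=0$, and a short estimate combining continuity of $h$ at $y_0$ with continuity of $f(\cdot,y_0)$ at $x_0$ gives joint continuity of $f$ at $(x_0,y_0)$. So the whole problem reduces to showing that $h$ is continuous at $y_0$, for which I plan to prove that $h$ is strictly $\kappa_Y$-continuous and then invoke $t_m(Y)\le\kappa_Y$.

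To show strict $\kappa_Y$-continuity of $h$, fix $B\subseteq Y$ with $|B|\le\kappa_Y$. For each $y\in B$ pick a sequence in $K$ realizing the supremum defining $h(y)$, collect these together with a sup-determining sample for the relevant continuous functions to form a set $S\subseteq K$ with $|S|\le\kappa$, and put $A=S\times(B\cup\{y_0\})$, so $|A|\le\kappa$. Strict $\kappa$-continuity of $f$ supplies a continuous $\hat f$ on $K\times Y$ with $\hat f|_A=f|_A$, and since $(x,y)\mapsto|\hat f(x,y)-\hat f(x,y_0)|$ is jointly continuous with $K$ compact, the function $\hat h(y)=\sup_{x\in K}|\hat f(x,y)-\hat f(x,y_0)|$ is continuous on $Y$, with $\hat h(y_0)=0$. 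Because $S$ realizes the suprema and $\hat f=f$ on $A$, one gets $\hat h(y)\ge h(y)$ for every $y\in B$.

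The main obstacle is precisely upgrading this to the exact equality $\hat h|_B=h|_B$ required by strict $\kappa_Y$-continuity: the matching function $\hat f$ is uncontrolled off $A$, so its oscillation can \emph{overshoot} the true oscillation at sample-free points of the compact fibre. Overcoming this is the heart of the argument, and I would attack it by making the sample set $S$ genuinely sup-determining in a self-supporting way, using the minitightness $t_m(X)=\kappa_X$ of the compact factor to keep $|S|\le\kappa$ while forcing the supremum of any continuous matching function to be attained as a limit along $S$; concretely this calls for a closing-off (elementary submodel) construction that stabilizes the pair $(S,A)$ so that no overshoot survives. Theorem \ref{Noble} offers an alternative route phrased for arbitrary products via $2$- and $\sigma$-continuity, but for the present two-factor statement with a compact factor I expect the direct oscillation argument above, once the overshoot is tamed, to be the more transparent path.
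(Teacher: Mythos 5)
This theorem is imported by the paper from Okunev's article \cite{O} without proof, so there is no internal argument to compare against; judging your proposal on its own terms, it contains a genuine gap --- one you candidly flag yourself. Your architecture is sound: the reduction to a compact factor $K$ is legitimate (restrictions of strictly $\kappa$-continuous functions are strictly $\kappa$-continuous, since global extensions restrict), the separate-continuity step is correct, and reducing joint continuity at $(x_0,y_0)$ to continuity at $y_0$ of the oscillation $h(y)=\sup_{x\in K}|f(x,y)-f(x,y_0)|$ works. But the heart of the matter --- producing a \emph{continuous} function agreeing with $h$ \emph{exactly} on a prescribed set $B$ with $|B|\le\kappa_Y$ --- is left unproved: you only obtain $\hat h\ge h$ on $B$, and your proposed remedy, a closing-off/elementary-submodel stabilization of $(S,A)$, is the wrong tool. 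The extension $\hat f$ is chosen \emph{after} $A$, so every enlargement of $S$ produces a new extension whose overshoot witnesses lie elsewhere; there is no fixed object for the closing-off to stabilize, and nothing forces the process to terminate after $\kappa$ steps. As written, the proof does not go through.

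What is worth recording is that the gap can be closed within your own framework by a much smaller idea than the one you reach for: do not take the supremum over all of $K$. Since each section $f(\cdot,y)$ is continuous on the compact set $K$ (your step two), the supremum defining $h(y)$ is \emph{attained} at some $x_y\in K$; put $S=\{x_y:y\in B\}$ and $A=S\times(B\cup\{y_0\})$, and let $\hat f$ be a continuous function on $K\times Y$ with $\hat f\upharpoonright A=f\upharpoonright A$. For each $y\in B\cup\{y_0\}$ the two continuous functions $\hat f(\cdot,y)$ and $f(\cdot,y)$ on $K$ agree on $S$, hence on $\overline{S}$ (the agreement set of continuous maps into a Hausdorff space is closed). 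Therefore $g(y)=\max_{x\in\overline{S}}\bigl|\hat f(x,y)-\hat f(x,y_0)\bigr|$ is continuous on $Y$ (maximum over the compact set $\overline{S}$ of a jointly continuous function) and, for $y\in B$, equals $\max_{x\in\overline{S}}|f(x,y)-f(x,y_0)|$, which is exactly $h(y)$: it is at most $h(y)$ because $\overline{S}\subseteq K$, and at least $h(y)$ because $x_y\in S$. This witnesses strict $\kappa_Y$-continuity of $h$, and the rest of your outline then completes the proof --- no submodels needed. Note also that Okunev's published route is different in spirit (it proceeds through $\mathbb{R}$-quotient maps and a Whitehead-type product theorem for locally compact factors rather than an oscillation function), so with the patch above your argument would be a genuinely more elementary alternative; without it, the proposal stops exactly at the step you identify as the heart of the argument.
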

Since $t_m(X)=t_0(X)$ for a normal (in particular compact) space $X$,
from the above theorem, we immediately obtain:
\begin{cor}\label{Okunev}(Okunev)
Let $n\in \NN$ and let $\kappa$ be an infinite cardinal.
If $X_k$ is compact and $t_0(X_k)\leq\kappa$, for $k=0,\ldots , n$, then $t_0(\prod_{k\leq n} X_k)\leq\kappa$. 
\end{cor}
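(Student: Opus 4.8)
The plan is to reduce the statement to the two-factor case established by Okunev's Theorem 2.14, via a straightforward induction on $n$, using the fact that $t_m$ and $t_0$ coincide on compact spaces. The one thing to keep track of throughout is that at each step the hypotheses of the two-factor theorem are met: one factor must be locally compact and the other must be Tychonoff.

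First I would set up the induction. The base case $n=0$ is immediate, since then $\prod_{k\leq 0}X_k=X_0$ and $t_0(X_0)\leq\kappa$ by assumption. For the inductive step, assume the conclusion holds for products of $n$ factors, and write $\prod_{k\leq n}X_k=X_n\times Y$, where $Y=\prod_{k<n}X_k$. Since each $X_k$ is compact, $X_n$ is in particular locally compact, while $Y$, being a finite product of compact spaces, is itself compact and hence Tychonoff. Thus the hypotheses of Theorem 2.14 are satisfied with $X=X_n$ and this $Y$.

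Next I would invoke Theorem 2.14 to obtain $t_m(X_n\times Y)\leq t_m(X_n)\,t_m(Y)$, and then replace the minitightnesses by functional tightnesses. Because $X_n$ and $Y$ are compact, hence normal, we have $t_m(X_n)=t_0(X_n)\leq\kappa$ and $t_m(Y)=t_0(Y)$; by the induction hypothesis $t_0(Y)\leq\kappa$, so that $t_m(X_n\times Y)\leq\kappa\cdot\kappa=\kappa$, using that $\kappa$ is infinite. Finally, since $X_n\times Y=\prod_{k\leq n}X_k$ is again compact and therefore normal, $t_0$ and $t_m$ agree on it, giving $t_0(\prod_{k\leq n}X_k)=t_m(\prod_{k\leq n}X_k)\leq\kappa$ and completing the induction.

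There is no genuine obstacle here, since all of the real content lies in Okunev's two-factor inequality, which we take as given. The only points requiring a moment's care are the bookkeeping---verifying at each inductive step that the distinguished factor is locally compact and the complementary factor is Tychonoff---and the repeated use of the identity $t_m=t_0$ for normal spaces, which is exactly what lets us pass freely between the hypothesis and conclusion (phrased in terms of $t_0$) and Okunev's theorem (phrased in terms of $t_m$).
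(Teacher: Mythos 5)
Your proof is correct and is exactly the argument the paper has in mind: the paper derives the corollary ``immediately'' from Okunev's two-factor theorem, implicitly by the same induction you spell out, using that $t_m=t_0$ for compact (normal) spaces. Your version just makes explicit the bookkeeping (local compactness of one factor, Tychonoff-ness of the other, $\kappa\cdot\kappa=\kappa$) that the paper leaves to the reader.
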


\section{Proofs}

Our proof of Theorem \ref{main} will be based on two lemmas given below.

\begin{lemma}\label{2-continuity}
Let $\kappa$ be an infinite cardinal and let $\{X_\alpha:\alpha<2^\kappa\}$ be a family of topological spaces.
If $f:\prod_{\alpha<2^\kappa} X_\alpha \to \RR$ is $\kappa$-continuous, then it is $2$-continuous.
\end{lemma}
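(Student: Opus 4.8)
The plan is to fix a subproduct $P=\prod_{\alpha<2^\kappa}Y_\alpha$ with $1\leq|Y_\alpha|\leq 2$ and prove that the restriction $f\upharpoonright P$ is continuous; by Definition \ref{sigma-cont-def} this is exactly what $2$-continuity demands. First I would check that the hypothesis descends to the subproduct: if $C\subseteq P$ has $|C|\leq\kappa$, then $C$ is a subset of the whole product of size $\leq\kappa$, and the subspace topology it inherits from $P$ coincides with the one it inherits from $\prod_{\alpha<2^\kappa}X_\alpha$. Hence $f\upharpoonright C$ is continuous, so $f\upharpoonright P$ is itself $\kappa$-continuous. In this way the lemma reduces to the single assertion that every $\kappa$-continuous real-valued function on $P$ is continuous, i.e. that $t_0(P)\leq\kappa$.

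Next I would pin down the topological type of $P$. Since all spaces are Hausdorff, each $Y_\alpha$ is a finite $T_1$ subspace of $X_\alpha$ and is therefore discrete; thus every factor is a one- or two-point discrete space. Discarding the singleton factors (which do not change the topology up to homeomorphism) and setting $S=\{\alpha<2^\kappa:|Y_\alpha|=2\}$, one obtains a homeomorphism $P\cong\{0,1\}^S$ onto a Cantor cube with $|S|\leq 2^\kappa$. The problem is thereby reduced to bounding the functional tightness of a Cantor cube indexed by at most $2^\kappa$ coordinates.

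To conclude, I would invoke the inequality $t_0(X)\leq d(X)$ recorded in Section 2 together with a density estimate. By the Hewitt--Marczewski--Pondiczery theorem, a product of at most $2^\kappa$ spaces each of density $\leq\kappa$ has density $\leq\kappa$; applied to $\{0,1\}^S$, whose factors are two-point (hence separable) spaces and for which $|S|\leq 2^\kappa$, this gives $d(P)=d(\{0,1\}^S)\leq\kappa$. Therefore $t_0(P)\leq d(P)\leq\kappa$, the restriction $f\upharpoonright P$ is continuous, and $f$ is $2$-continuous, as required.

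The main obstacle — indeed essentially the only nontrivial point — is the density computation, and it is here that the exponent $2^\kappa$ in the index set is used crucially: it is calibrated so that a Cantor cube with $2^\kappa$ coordinates still has density $\leq\kappa$, which in turn pushes its functional tightness down to $\kappa$. Were the index set strictly larger than $2^\kappa$, the density bound would fail and this argument would collapse, which already signals why the cardinal $2^\kappa$ appears in the hypothesis of Theorem \ref{main}.
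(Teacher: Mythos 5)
Your proposal is correct and follows essentially the same route as the paper: restrict $f$ to the subproduct, bound its density by $\kappa$ via the Hewitt--Marczewski--Pondiczery theorem, and conclude continuity from $t_0(Y)\leq d(Y)$. Your intermediate reduction to a Cantor cube $\{0,1\}^S$ is harmless but unnecessary, since HMP applies directly to the subproduct (each factor, having at most two points, trivially has density $\leq\kappa$), which is how the paper argues.
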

\begin{proof}
Let $Y=\prod_{\alpha<2^\kappa} Y_\alpha$, where $Y_\alpha\subseteq X_\alpha$ is nonempty and has at most two elements. By the Hewitt-Marczewski-Pondiczery theorem
\cite[2.3.15]{E} $d(Y)\leq\kappa$. The function $f\upharpoonright Y:Y\to \RR$ is $\kappa$-continuous and since
$t_0(Y)\leq d(Y)$ \cite[Corollary 1]{A}, the result follows.
\end{proof}

\begin{lemma}\label{sigma-continuity}
Let $\kappa,\lambda$ be infinite cardinals and let $\{X_\alpha:\alpha<\lambda\}$ be a family of compact spaces such that $t_0(X_\alpha)\leq\kappa$, for every
$\alpha<\lambda$. 
If $f:\prod_{\alpha<\lambda} X_\alpha\to \RR$ is $\kappa$-continuous, then it is $\sigma$-continuous.  
\end{lemma}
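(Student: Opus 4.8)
The plan is to fix an arbitrary $\sigma$-product $\Sigma:=\sigma(X,a)$ in $\prod_{\alpha<\lambda}X_\alpha$ and prove that $g:=f\upharpoonright\Sigma$ is continuous; since $g$ is a restriction of $f$, it is again $\kappa$-continuous. Because every point of $\Sigma$ may be taken as the base point (replacing $a$ by $p\in\Sigma$ leaves the underlying set $\sigma(X,a)$ unchanged), it suffices to prove continuity of $g$ at the base point $a$. I would set up the finite subproducts $\Sigma_F=\{x\in\Sigma:\supp(x)\subseteq F\}$ for finite $F\subseteq\lambda$, together with the natural retractions $r_F:\Sigma\to\Sigma_F$ that fix the coordinates in $F$ and send all others to $a$; note that $\Sigma=\bigcup\{\Sigma_F:F\in[\lambda]^{<\omega}\}$ and that $r_F(x)=x$ as soon as $F\supseteq\supp(x)$.

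The first genuine step is to pin down the behaviour on finite subproducts. Each $\Sigma_F$ is homeomorphic to the finite product $\prod_{\alpha\in F}X_\alpha$, which is compact and, by Corollary~\ref{Okunev}, satisfies $t_0\le\kappa$. Since $g\upharpoonright\Sigma_F$ is $\kappa$-continuous, it is therefore continuous; consequently each $g\circ r_F$ is continuous on $\Sigma$, and $g$ is the pointwise limit (indeed, eventually constant along the directed set $[\lambda]^{<\omega}$) of the continuous functions $g\circ r_F$. Equivalently, continuity of $g$ at $a$ amounts to equicontinuity of the family $\{g\circ r_F\}$ at $a$.

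Suppose, towards a contradiction, that $g$ is discontinuous at $a$; fix $\varepsilon>0$ and let $B=\{x\in\Sigma:|g(x)-g(a)|\ge\varepsilon\}$, so that $a\in\overline B$. The finite-subproduct continuity just established shows that $B\cap\Sigma_F$ is closed in $\Sigma_F$ and omits $a$ for every finite $F$; thus no bad point confined to a fixed finite set of coordinates can approach $a$, and every bad point near $a$ must activate coordinates outside any prescribed finite set. The goal is then to extract from $B$ a subset $D$ with $|D|\le\kappa$ and $a\in\overline D$: once this is done, $\kappa$-continuity forces $g\upharpoonright(D\cup\{a\})$ to be continuous, whence $g(a)\in\overline{g[D]}$, contradicting $g[D]\subseteq\{t:|t-g(a)|\ge\varepsilon\}$.

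The heart of the argument, and the step I expect to be the main obstacle, is the construction of this small accumulating set $D$, that is, controlling the supports of the chosen bad points so that they genuinely converge to $a$ in the absence of any tightness hypothesis on the factors. The favourable case is when one can choose infinitely many bad points with pairwise disjoint supports: such a sequence automatically converges to $a$ coordinatewise and already yields a countable $D$. I would aim to reduce always to this case by a recursion that, at each stage, selects a bad point in a shrinking neighbourhood of $a$ and then uses the retractions to strip off the finitely many coordinates already used; the difficulty is that when a base coordinate $a_\alpha$ is not isolated in $X_\alpha$, a bad point near $a$ need not vanish on previously used coordinates, so disjointness is not free. This is exactly where the hypothesis $t_0(X_\alpha)\le\kappa$ must be fed in coordinatewise, to localize the obstruction inside each nontrivial factor to a set of size $\le\kappa$, and then combined with a $\Delta$-system and pigeonhole analysis of the finite supports to keep $|D|\le\kappa$. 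I therefore expect this support-control step to carry essentially all the weight of the proof, the remaining ingredients (rebasing and Corollary~\ref{Okunev}) being routine.
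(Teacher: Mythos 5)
Your setup is sound as far as it goes: rebasing so that it suffices to check continuity at the base point, continuity of the restriction to each finite subproduct $\Sigma_F$ via Corollary~\ref{Okunev}, and the contradiction template (a set $D$ of bad points with $|D|\le\kappa$ and $a\in\overline{D}$ contradicts $\kappa$-continuity) are all correct. But there is a genuine gap exactly where you place ``essentially all the weight'': the construction of $D$ is never carried out, and the tools you gesture at do not suffice. A $\Delta$-system extraction needs an uncountable family of bad points, whereas the bad set $B$ may be countable; and even granted such a family, disjointification works only off the root $R$ --- the extracted points still deviate from $a$ on the root coordinates, and to make them accumulate at $a$ (or at any single point of $\Sigma_R$ you could then process further) you would need convergent subsequences in the arbitrary compact factors $X_\alpha$, $\alpha\in R$. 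Compact spaces with small functional tightness need not be sequentially compact: $\beta\NN$ is a legitimate factor here, since $t_0(\beta\NN)\le d(\beta\NN)=\omega$, yet sequences in $\beta\NN$ need not have convergent subsequences. Moreover, $t_0(X_\alpha)\le\kappa$ does not ``localize the obstruction inside each nontrivial factor to a set of size $\le\kappa$'' in any standard sense --- functional tightness controls continuity of functions restricted to small subsets, not the closure structure of coordinate projections of $B$; note also that your observation that bad points cannot all live in one $\Sigma_F$ does not give individual bad points whose supports \emph{avoid} a prescribed finite set, which is what disjointification would require. So the favourable disjoint-support case is not one you can always reduce to, and the plan stalls.

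The paper's proof sidesteps this obstacle by never trying to make bad points accumulate at the point of discontinuity, and by exploiting a resource your plan leaves untouched: $f$ is defined and $\kappa$-continuous on the \emph{whole} compact product $X$, not just on the $\sigma$-product. One recursively picks bad points $y^n$ in shrinking basic boxes around $y$, where the boxes are chosen (using continuity on finite subproducts, i.e.\ Corollary~\ref{Okunev}) so that the corresponding \emph{closed} box over the current finite support set $A^n$ maps into a fixed $V$ with $\overline{V}\subseteq U$. Compactness of $X$ then yields a complete accumulation point $x$ of $\{y^n:n\in\NN\}$ --- a point that may lie far outside the $\sigma$-product --- and two countable sets are played against each other: the truncations $x^n$ (equal to $x$ on $A^n$, to $a$ elsewhere) converge pointwise to $x$ and lie in the closed boxes, so $\kappa$-continuity on $\{x^n:n\in\NN\}\cup\{x\}$ forces $f(x)\in\overline{V}\subseteq U$, while $\kappa$-continuity on $\{y^n:n\in\NN\}\cup\{x\}$, with $f(y^n)\notin U$, forces $f(x)\notin U$. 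No support control beyond ``finite and increasing'' is needed. To salvage your approach you would have to actually prove your key claim --- that discontinuity of $g$ at $a$ yields bad $D$ with $|D|\le\kappa$ and $a\in\overline{D}$ --- and the hypotheses give you no purchase on it by support combinatorics alone.
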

\begin{proof}
Let $X=\prod_{\alpha<\lambda}X_\alpha$.
Striving for a contradiction, assume that there are: a $\sigma$-product $Y=\sigma(X,a)$, for some $a\in X$, and an open set $U\subseteq \RR$
such that $f^{-1}(U)\cap Y$ is not open in $Y$ (i.e. $f$ restricted to $Y$ is not continuous). This means that there is $y\in f^{-1}(U)\cap Y$ such that
\begin{align}\label{1}
f(W\cap Y)\nsubseteq U,\quad \text{for any open neighborhood $W$ of $y$ in $\prod_{\alpha<\lambda}X_\alpha$}.
\end{align}
Let $V\subseteq \overline{V}\subseteq U$ be an open neighborhood of $f(y)$.

\medskip

We shall inductively construct:
\begin{itemize}
 \item a sequence $(y^n)_{n\in \NN}$ of elements of the $\sigma$-product $Y$,
 \item a sequence $(A^n)_{n\in\NN}$ of finite subsets of $\lambda$ and
 \item a family $\{U^n_\alpha\subseteq X_\alpha:n\in \NN,\;\alpha\in A^n\}$ of open sets,
\end{itemize}
satisfying the following conditions:

\medskip

\begin{enumerate}[(i)]\itemsep9pt
 \item $y^0=y$ and $f(y^n)\notin U$, for $n\geq 1$;
 \item $A^0=\supp(y)$ and $A^n=A^{n-1}\cup\supp(y^n)$, for $n\geq 1$;
 \item $U_\alpha^n$ is an open subset of $X_\alpha$, for $n\geq 0$ and $\alpha\in A^n$;
 \item $U^n_\alpha\supseteq U^{n+1}_\alpha$, for $n\geq 0$ and $\alpha\in A^n$;
 \item $f\left( \prod_{\alpha\in A^n} \overline{U^n_\alpha}\times \prod_{\alpha\notin A^n}\{a_\alpha\} \right)\subseteq V$, for $n\geq 1$;
 \item $y\in\prod_{\alpha\in A^{n}} U^n_\alpha\times \prod_{\alpha\notin A^{n}}\{a_\alpha\}$, for $n\geq 0;$
 \item $y^n\in \left( \prod_{\alpha\in A^{n-1}} U^{n-1}_\alpha\times \prod_{\alpha\notin A^{n-1}} X_\alpha\right)\cap Y$, for $n\geq 1$.
\end{enumerate}

\medskip

For $n=0$ we put $y^0=y$, $A^0=\supp(y)$ and $U^0_\alpha=X_\alpha$, for $\alpha\in A^0$.
It is easy to check that conditions (i)--(vii) are satisfied for $n=0$.

Now, fix $m\in \NN$ and suppose that, for every $n\leq m$, we have constructed:
$y^n\in Y$, a finite set $A^n\subseteq \lambda$ and a family $\{U^n_\alpha:\alpha\in A^n\}$ satisfying (i)--(vii) for all $n\leq m$.
We shall construct $y^{m+1}$ in such a way that conditions (i)--(vii) remain true for $n\leq m+1$. To this end,
consider the set
$$W_m=\prod_{\alpha\in A^m} U^m_\alpha \times \prod_{\alpha\notin A^m} X_\alpha.$$
By the inductive assumption, condition (vi) holds for $n=m$ and thus we infer that the
set $W_m$ is an open neighborhood of $y$ in $\prod_{\alpha<\lambda} X_\alpha$. Hence, by \eqref{1}, there is
\begin{align}\label{2}
y^{m+1}\in W_m\cap Y \quad\text{such that}\quad f(y^{m+1})\notin U.
\end{align}

Let
\begin{align}\label{3}
A^{m+1}=A^m\cup\supp(y^{m+1}).
\end{align}
To define open sets $U^{m+1}_\alpha$, for $\alpha\in A^{m+1}$, we proceed as follows:

Let
$$K_{A^{m+1}}=\prod_{\alpha\in A^{m+1}}X_\alpha\times \prod_{\alpha\notin A^{m+1}}\{a_\alpha\}.$$
The set $K_{A^{m+1}}$ can be identified with the finite product $\prod_{\alpha\in A^{m+1}}X_\alpha$ and thus, by Corollary \ref{Okunev}, we infer that
$f\upharpoonright K_{A^{m+1}}$ is continuous. Moreover, by \eqref{3} (and the inductive assumption), $y\in K_{A^{m+1}}$. Therefore,
there is a basic open neighborhood of $y$ in $K_{A^{m+1}}$ whose closure is mapped by $f$ into $V$, i.e.
for
some open sets $U^{m+1}_\alpha\subseteq X_\alpha$, where $\alpha\in A^{m+1}$, we have
\begin{align}\label{4}
y\in \prod_{\alpha\in A^{m+1}}U^{m+1}_\alpha\times \prod_{\alpha\notin A^{m+1}}\{a_\alpha\} \quad \text{and} \quad
f\left( \prod_{\alpha\in A^{m+1}}\overline{U^{m+1}_\alpha}\times \prod_{\alpha\notin A^{m+1}}\{a_\alpha\}  \right)\subseteq V.
\end{align}
Again by \eqref{3}, $A^m\subseteq A^{m+1}$ and since (vi) holds for $n=m$, we may additionally assume that
\begin{align}\label{5}
U^m_\alpha\supseteq U^{m+1}_\alpha\quad \text{for}\quad \alpha\in A^m.
\end{align}
Now, for any $n\leq m+1$, conditions (i)--(vii) follow from (2)--(5) and the inductive assumption.
This finishes the inductive construction.

\medskip

Note that, for any $n\in \NN$, we have $\supp(y^n)\subseteq A^n$ (by (ii)) and thus, it follows from (i) and (v) that $y^n_\alpha\notin U^n_\alpha$,
for some $\alpha\in A^n$.
On the other hand, by (vii), $y^{n+1}_\alpha\in U^n_\alpha$, for any $n\in \NN$ and $\alpha\in A^n$. This implies that $y^{n+1}\notin \{y^0,\ldots ,y^n\}$ and hence
the set $\{y^n:n\in \NN\}$ is infinite.

\medskip

The space $X=\prod_{\alpha< \lambda}X_\alpha$ is compact and thus the (infinite) set $\{y^n:n\in\NN\}$ has a complete accumulation point $x\in X$.
For $n\in \NN$, let
\begin{equation*}
x^n_\alpha=
  \left\{\begin{aligned}
  &   x_\alpha\quad &\text{if}\quad \alpha\in A^n\\
&a_\alpha  &\text{if}\quad \alpha\notin A^n
  \end{aligned}
 \right. 
\end{equation*}
and let $x^n=(x^n_\alpha)_{\alpha<\lambda}\in \prod_{\alpha<\lambda}X_\alpha$.

Observe that
\begin{align}\label{6}
\text{for any } \alpha<\lambda \text{ we have } x_\alpha=\lim_{n\to\infty} x^n_\alpha,
\end{align}
i.e. $x$ is the pointwise limit of the sequence $(x^n)_{n\in \NN}$. Indeed, if
$\alpha\in \bigcup A^n$ then
$\lim_{n\to \infty} x^n_\alpha=x_\alpha$ since $A^n\subseteq A^{n+1}$, by (ii).
If $\alpha\notin \bigcup A^n$ then by definition of $x^n_\alpha$ we have
$x^n_\alpha=a_\alpha$ for every $n\in \NN$. Moreover, by (ii), $\alpha\notin \bigcup \supp(y^n)$ which means that
$y^n_\alpha=a_\alpha$, for every $n\in \NN$. Since $x$ is a complete accumulation point of $\{y^n:n\in\NN\}$ we infer that
$x_\alpha=a_\alpha$ too.

\medskip

\textbf{Claim.} For every $n\geq 0$ we have
$$x^n\in\left( \prod_{\alpha\in A^n}\overline{U^n_\alpha}\times \prod_{\alpha\notin A^n}\{a_\alpha\} \right).$$
\begin{proof}
By definition of $x^n$ we clearly have $x^n_\alpha=a_\alpha$ for $\alpha\notin A^n$. We need to show that $x^n_\alpha=x_\alpha\in \overline{U^n_\alpha}$,
for any $\alpha\in A^n$.

Striving for a contradiction, suppose that $x^n_\alpha=x_\alpha\notin \overline{U^n_\alpha}$, for some $\alpha\in A^n$.
Then, the set
$$M=\left(\prod_{\beta\neq \alpha}X_\beta \times \left( X_\alpha\setminus \overline{U^n_\alpha} \right) \right)$$
is an open neighborhood of $x$ in $X$. Since $x$ is a complete accumulation point of $\{y^n:n\in \NN\}$, there is an infinite set $I\subseteq\NN$ such that
$\{y^i:i\in I\}\subseteq M$. This, however, means that
$$y^i_\alpha\in X_\alpha\setminus \overline{U^n_\alpha},\quad \text{for any}\quad i\in I.$$
Take $j\in I$ with $j>n$ (recall that $I$ is infinite). We have
$$y^j_\alpha\in X_\alpha\setminus \overline{U^n_\alpha}.$$ But $j>n$, so $A_{j-1}\supseteq A_n\ni \alpha$ and by (vii) and (iv),
$$y^j_\alpha\in U^{j-1}_\alpha\subseteq U^n_\alpha,$$
a contradiction.
\end{proof}

By assumption, the function $f:X\to \RR$ is $\kappa$-continuous and thus it is continuous when restricted to the countable set $\{x^n:n\in \NN\}\cup \{x\}$.
Since, by Claim and condition (v), we have $f(x^n)\in V$ for any $n\in \NN$, we infer (using \eqref{6}) that $f(x)\in \overline{V}\subseteq U$.

On the other hand, $f$ restricted to the countable set $\{y^n:n\in \NN\}\cup\{x\}$ is continuous (again by the $\kappa$-continuity of $f$).
By (i), $f(y^n)\notin U$ for $n\geq 1$ contradicting the fact that $x$ is a complete accumulation point of $\{y^n:n\in \NN\}$.
\end{proof}

Now, Theorem \ref{main} follows directly from Lemmas \ref{2-continuity} and \ref{sigma-continuity} and Theorem \ref{Noble}.
In particular, we obtain the following result which affirmatively answers a question posed by Okunev \cite[Question 3.2]{O}.
\begin{cor}\label{AnswerOkunev}
$t_0(X)=t_m(X)=t_m(X^\mathfrak{c})=t_0(X^\mathfrak{c})$ for any compact space $X$. 
\end{cor}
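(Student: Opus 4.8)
The plan is to reduce the whole chain to the single equality $t_0(X)=t_0(X^{\mathfrak{c}})$. The two outer equalities come for free: a compact Hausdorff space is normal, and $X^{\mathfrak{c}}$ is again compact Hausdorff by Tychonoff's theorem, so the already-noted coincidence $t_m=t_0$ for normal spaces yields $t_0(X)=t_m(X)$ and $t_0(X^{\mathfrak{c}})=t_m(X^{\mathfrak{c}})$. Once $t_0(X)=t_0(X^{\mathfrak{c}})$ is established, the chain closes up as $t_m(X)=t_0(X)=t_0(X^{\mathfrak{c}})=t_m(X^{\mathfrak{c}})$. (We may assume $X\neq\emptyset$, the empty case being trivial since $t_0(\emptyset)=\omega=t_0(\emptyset^{\mathfrak{c}})$.)

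The technical heart is a monotonicity property of $t_0$ under retractions, which I would isolate first: if $r\colon Y\to X$ is a retraction with continuous section $s\colon X\to Y$, so that $r\circ s=\mathrm{id}_X$, then $t_0(X)\leq t_0(Y)$. The verification is routine. Put $\kappa=t_0(Y)$ and let $g\colon X\to\RR$ be $\kappa$-continuous. Then $g\circ r$ is $\kappa$-continuous, since for any $B\subseteq Y$ with $|B|\leq\kappa$ the map $r\upharpoonright B$ is continuous and, as $|r(B)|\leq\kappa$, the restriction $g\upharpoonright r(B)$ is continuous, so $(g\circ r)\upharpoonright B$ is continuous; hence $g\circ r$ is continuous because $t_0(Y)=\kappa$, and therefore $g=(g\circ r)\circ s$ is continuous. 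This is precisely the step where I must resist the tempting but invalid appeal to monotonicity under closed subspaces: the paper stresses that $t_0$ is \emph{not} closed-subspace monotone, and the extra rigidity provided by a retraction is exactly what saves the argument. I expect this to be the only genuine subtlety.

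With this lemma in hand, both inequalities follow from well-chosen retractions. For $t_0(X)\leq t_0(X^{\mathfrak{c}})$, fix $a\in X$ and let $s\colon X\to X^{\mathfrak{c}}$ send $x$ to the point whose $0$th coordinate is $x$ and all of whose remaining coordinates equal $a$; composing with the projection onto the $0$th coordinate exhibits $X$ as a retract of $X^{\mathfrak{c}}$. For the reverse inequality, set $\kappa=t_0(X)\geq\omega$, so that $\mathfrak{c}=2^\omega\leq 2^\kappa$; projecting $X^{2^\kappa}$ onto its first $\mathfrak{c}$ coordinates and filling in the remaining coordinates by a constant section exhibits $X^{\mathfrak{c}}$ as a retract of $X^{2^\kappa}$. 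Applying the retract lemma and then Theorem \ref{main} to the family consisting of $2^\kappa$ copies of $X$ (each compact with $t_0\leq\kappa$) gives $t_0(X^{\mathfrak{c}})\leq t_0(X^{2^\kappa})\leq\kappa=t_0(X)$. Combining the two inequalities yields $t_0(X)=t_0(X^{\mathfrak{c}})$, and Theorem \ref{main} is invoked only at this last step.
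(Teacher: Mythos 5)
Your proof is correct and structurally the same as the paper's: both reduce the chain to the single equality $t_0(X)=t_0(X^{\mathfrak{c}})$ using $t_m=t_0$ on compact (hence normal) spaces for the outer equalities, obtain $t_0(X^{\mathfrak{c}})\leq t_0(X)$ from Theorem \ref{main}, and obtain the reverse inequality from the fact that suitable surjections cannot raise functional tightness --- where the paper cites Arhangel'skii's result that quotient mappings do not raise $t_0$, you prove inline its special case for retractions (which is indeed subsumed by the citation, since a continuous map with a continuous section is a quotient map). Your second retraction, of $X^{2^\kappa}$ onto $X^{\mathfrak{c}}$, correctly makes explicit a step the paper leaves implicit, namely that Theorem \ref{main}, stated for products of exactly $2^\kappa$ factors, applies to the smaller power since $\mathfrak{c}=2^\omega\leq 2^{t_0(X)}$ (the paper's tacit alternative being to pad the product with singleton factors).
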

\begin{proof}
The minitightness and the tightness are equal for any compact space so $t_0(X)=t_m(X)$ and $t_m(X^\mathfrak{c})=t_0(X^\mathfrak{c})$.
Since $t_0(X)\geq \omega$,
by Theorem \ref{main} we have $t_0(X)\geq t_0(X^\mathfrak{c})$. The opposite inequality follows from the fact that the functional tightness
cannot be raised by quotient mappings \cite[Proposition 3]{A}.
\end{proof}

Let us recall the following theorem proved by Uspenskii \cite{U} concerning the functional tightness of Cantor cubes
and involving measurable cardinals.
\begin{thrm}\cite[Theorem 2]{U}\label{cubes}
If $\kappa$ is less than the first measurable cardinal, then any $\omega$-continuous function $f:\{0,1\}^\kappa\to \mathbb{R}$, is continuous.
In other words, $t_0(\{0,1\}^\kappa)=\omega$ provided $\kappa$ is below the first measurable cardinal.
\end{thrm}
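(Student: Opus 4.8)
The plan is to reduce the whole statement to a single structural fact about $\omega$-continuous functions, namely that \emph{every $\omega$-continuous $f:\{0,1\}^\kappa\to\RR$ depends on only countably many coordinates} — i.e. there is a countable $S\subseteq\kappa$ with $f(x)=f(y)$ whenever $x\upharpoonright S=y\upharpoonright S$. Granting this, write $f=g\circ\pi_S$ where $\pi_S:\{0,1\}^\kappa\to\{0,1\}^S$ is the projection. First I would check that $g:\{0,1\}^S\to\RR$ is again $\omega$-continuous: any countable $C\subseteq\{0,1\}^S$ lifts, via zero-padding off $S$, to a countable $C'\subseteq\{0,1\}^\kappa$ carried homeomorphically onto $C$ by $\pi_S$, and $f\upharpoonright C'$ is continuous. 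Since $S$ is countable, $\{0,1\}^S$ is compact \emph{metrizable}, hence first countable; so $\omega$-continuity of $g$ (which in particular forces sequential continuity, a convergent sequence together with its limit being a countable set) upgrades to genuine continuity of $g$, and therefore of $f$. This part is routine and uses nothing beyond metrizability, or if one prefers the inequality $t_0(\{0,1\}^S)\le d(\{0,1\}^S)=\omega$ recalled in the excerpt.

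The substance is producing the countable $S$, and this is where the hypothesis on $\kappa$ enters. I would organize the coordinates through the family $\mathcal N=\{A\subseteq\kappa: f(x)=f(y)\text{ whenever }x\upharpoonright(\kappa\setminus A)=y\upharpoonright(\kappa\setminus A)\}$ of \emph{$f$-null} sets, those blocks of coordinates that $f$ ignores. It is immediate that $\mathcal N$ is closed under subsets and finite unions; more importantly, $\omega$-continuity makes $\mathcal N$ a $\sigma$-ideal. Indeed, if $A=\bigcup_n A_n$ with $A_n\in\mathcal N$ increasing, and $x,y$ agree off $A$, then the points $z_n$ that copy $y$ on $A_n$ and agree with $x$ elsewhere satisfy $f(z_n)=f(x)$ (as $z_n$ differs from $x$ only on the null set $A_n$) and $z_n\to y$, so sequential continuity gives $f(x)=\lim_n f(z_n)=f(y)$, i.e. $A\in\mathcal N$. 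With $\mathcal N$ a $\sigma$-ideal, the assertion that $f$ depends on countably many coordinates is exactly the assertion that $\mathcal N$ contains a \emph{co-countable} set, equivalently that the dual $\sigma$-filter $\{\,\kappa\setminus A:A\in\mathcal N\,\}$ has a countable member.

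The hard part — and the only place the set theory is used — is to show that $\mathcal N$ \emph{must} contain a co-countable set once $\kappa$ lies below the first measurable cardinal. The obstruction is genuine, which is what makes this the main difficulty. If $f$ had no countable support one could fix a point $p$ of discontinuity (translating by a self-homeomorphism of $\{0,1\}^\kappa$ one may take $p=\mathbf 0$) and, for each finite $F\subseteq\kappa$, choose $y_F$ agreeing with $p$ on $F$ with $|f(y_F)-f(p)|$ bounded below; the net $(y_F)$ converges to $p$ while $f(y_F)\not\to f(p)$. Since $\omega$-continuity controls only \emph{countable} sets and $p$ has uncountable character, one cannot extract from this net an honest convergent sequence, and bridging exactly this gap is what a measurable cardinal would supply. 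Concretely, I expect to convert a hypothetical essential dependence of $f$ on uncountably many coordinates, together with the two-valued structure of $\{0,1\}^\kappa$ and the $\sigma$-completeness of $\mathcal N$, into a nonprincipal $\sigma$-complete ultrafilter on a set of size $\le\kappa$ — a transfinite fusion selecting, stage by stage, coordinate blocks across which $f$ still oscillates, with the ultrafilter recording which blocks carry the oscillation. By Ulam's theorem the least cardinal carrying such an ultrafilter is measurable, so below the first measurable cardinal none exists, and a countable support must have been present all along. That this final conversion genuinely requires the two-valued $\{0,1\}$-structure, and not merely a real-valued measure (which can live below the first measurable cardinal), is the most delicate point, and the step I would expect to cost the most work.
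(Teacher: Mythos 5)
The paper contains no proof of this statement: it is quoted directly from Uspenskii \cite{U}, so your attempt can only be judged on its own terms, and on those terms it has a genuine gap at the theorem's core. What you do prove is correct and cleanly done: the family $\mathcal{N}$ of coordinate sets ignored by $f$ is closed under subsets and finite unions, your increasing-union argument (the points $z_n$ copying $y$ on $A_n$ converge to $y$, and $\omega$-continuity on the countable set $\{z_n:n\in\NN\}\cup\{y\}$ forces $f(x)=f(y)$) correctly upgrades $\mathcal{N}$ to a $\sigma$-ideal, and the factorization $f=g\circ\pi_S$ through the metrizable cube $\{0,1\}^S$, together with $t_0\leq d$, correctly shows that countable dependence plus $\omega$-continuity implies continuity. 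But this only establishes that the theorem is \emph{equivalent} to the assertion that $\mathcal{N}$ contains a co-countable set; that assertion is the entire content of Uspenskii's theorem, and for it you offer no proof. ``A transfinite fusion selecting, stage by stage, coordinate blocks across which $f$ still oscillates, with the ultrafilter recording which blocks carry the oscillation'' names a desired object without constructing it: you never specify the blocks, the family of subsets of $\lambda$ that is to be the ultrafilter, why that family is upward closed and decides every set (two-valuedness), or why it is $\sigma$-complete. ``I expect to convert'' is a research plan, not an argument.

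Moreover, the tools you have actually developed provably cannot close this gap, because every use you make of $\omega$-continuity is through convergent sequences, i.e.\ through mere sequential continuity, and sequential continuity is consistently too weak. If $\kappa\leq\mathfrak{c}$ carries an atomless countably additive probability measure $\mu$ defined on all of $\mathcal{P}(\kappa)$ (a real-valued measurable cardinal, consistent relative to a measurable, and necessarily strictly below the first two-valued measurable, which is inaccessible and hence exceeds $\mathfrak{c}$), then $x\mapsto\mu(\{\alpha<\kappa: x_\alpha=1\})$ is sequentially continuous by dominated convergence, yet discontinuous and without countable support, since countable sets are $\mu$-null. So the missing extraction must invoke continuity of $f$ on countable sets that are \emph{not} convergent sequences --- limits along countable independent families of coordinate blocks are what force the oscillation pattern to be two-valued rather than merely measure-like --- and making this precise is exactly the content of Uspenskii's proof in \cite{U} (compare the paper's closing remark and \cite{Pl1} for the sequentially continuous analogue, where the threshold is \emph{not} the first measurable cardinal). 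You correctly identified this as ``the step I would expect to cost the most work''; as submitted, that work has not been done, and without it the proposal does not prove the theorem.
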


Observe that the above result is a refinement of Lemma \ref{2-continuity} for $\kappa=\omega$. Applying Lemma \ref{sigma-continuity}
and Theorems \ref{cubes} and \ref{Noble} we get:

\begin{thrm}\label{main2}
Let $\{X_\alpha:\alpha<\lambda\}$ be a family of compact spaces
such that $t_0(X_\alpha)\leq \kappa$. If $\lambda$ is less than the first measurable cardinal,
then $t_0\left( \prod_{\alpha<\lambda} X_\alpha \right)\leq \kappa$.
\end{thrm}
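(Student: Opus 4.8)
The plan is to follow exactly the scheme by which Theorem \ref{main} is deduced from Lemmas \ref{2-continuity} and \ref{sigma-continuity} together with Theorem \ref{Noble}, but to replace the appeal to Lemma \ref{2-continuity} (which forced the restriction $\lambda\leq 2^\kappa$ through the Hewitt--Marczewski--Pondiczery theorem) by Uspenskii's Theorem \ref{cubes}. Fix an arbitrary $\kappa$-continuous function $f\colon\prod_{\alpha<\lambda}X_\alpha\to\RR$; to establish $t_0\left(\prod_{\alpha<\lambda}X_\alpha\right)\leq\kappa$ it suffices to show that $f$ is continuous.

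First I would invoke Lemma \ref{sigma-continuity}, which carries no restriction on $\lambda$: since each $X_\alpha$ is compact with $t_0(X_\alpha)\leq\kappa$ and $f$ is $\kappa$-continuous, $f$ is automatically $\sigma$-continuous. By Theorem \ref{Noble} it then remains only to check that $f$ is $2$-continuous, after which continuity is immediate.

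The core of the argument is the verification of $2$-continuity via Theorem \ref{cubes}. Let $Y=\prod_{\alpha<\lambda}Y_\alpha$ with $1\leq|Y_\alpha|\leq 2$ for each $\alpha$. Since the spaces $X_\alpha$ are Hausdorff, every two-point subspace $Y_\alpha$ is discrete, so $Y$ is homeomorphic to the Cantor cube $\{0,1\}^\mu$, where $\mu=|\{\alpha<\lambda:|Y_\alpha|=2\}|\leq\lambda$. As $\lambda$ is below the first measurable cardinal, so is $\mu$. The restriction $f\upharpoonright Y$ is $\kappa$-continuous, and because $\kappa\geq\omega$ it is in particular $\omega$-continuous; hence Theorem \ref{cubes} applies and shows that $f\upharpoonright Y$ is continuous. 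As $Y$ was an arbitrary product of one- and two-point factors, $f$ is $2$-continuous.

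Having established both $\sigma$- and $2$-continuity of $f$, Theorem \ref{Noble} yields that $f$ is continuous, and since $f$ was an arbitrary $\kappa$-continuous function we conclude $t_0\left(\prod_{\alpha<\lambda}X_\alpha\right)\leq\kappa$. The only genuinely delicate point is the passage from Lemma \ref{2-continuity} to Theorem \ref{cubes}: one must notice that $\kappa$-continuity degrades to the $\omega$-continuity hypothesis of Uspenskii's result, and that the exponent $\mu$ of the relevant Cantor cube inherits from $\lambda$ the property of lying below the first measurable cardinal. Everything else is a direct reassembly of results already in hand.
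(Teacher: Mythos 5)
Your proposal is correct and is essentially the paper's own proof: the paper deduces Theorem \ref{main2} precisely by combining Lemma \ref{sigma-continuity} (for $\sigma$-continuity), Uspenskii's Theorem \ref{cubes} (replacing Lemma \ref{2-continuity} to get $2$-continuity, since each product of one- and two-point factors is a Cantor cube of weight below the first measurable cardinal and $\kappa$-continuity implies $\omega$-continuity), and Noble's Theorem \ref{Noble}. Your write-up merely fills in the routine details the paper leaves implicit, and does so accurately.
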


In particular, if there are no measurable cardinals the functional tightness is preserved by arbitrarily large products of compacta.
We can refine Corollary \ref{AnswerOkunev} in the following way:

\begin{cor}
If $\kappa$ is less than the first measurable cardinal then 
$t_0(X)=t_m(X)=t_m(X^\kappa)=t_0(X^\kappa)$ for any compact space $X$. 
\end{cor}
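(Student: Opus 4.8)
The plan is to collapse the four-term chain to the single equality $t_0(X)=t_0(X^\kappa)$ and then establish the two inequalities separately, reusing the tools already assembled. Since $X$ is compact it is normal, and since $X^\kappa$ is again compact (by Tychonoff's theorem) it too is normal; as recorded earlier in the paper, minitightness and functional tightness coincide on normal spaces, so $t_0(X)=t_m(X)$ and $t_m(X^\kappa)=t_0(X^\kappa)$. Thus the outer and the inner equalities in the statement are automatic, and everything reduces to proving $t_0(X)=t_0(X^\kappa)$.

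For the inequality $t_0(X)\le t_0(X^\kappa)$ I would argue exactly as in the proof of Corollary \ref{AnswerOkunev}: a coordinate projection $X^\kappa\to X$ is a continuous open surjection, hence a quotient map, and by \cite[Proposition 3]{A} the functional tightness cannot be raised by quotient mappings. (Such a projection exists because $X^\kappa$ is nonempty whenever $X$ is.)

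The substantive direction is $t_0(X^\kappa)\le t_0(X)$, and this is where Theorem \ref{main2} does the work. Put $\mu=t_0(X)$ and write $X^\kappa=\prod_{\alpha<\kappa}X_\alpha$ with every factor $X_\alpha=X$ compact and satisfying $t_0(X_\alpha)=\mu$. The index set has cardinality $\kappa$, which by hypothesis lies below the first measurable cardinal, so Theorem \ref{main2} applies with the number of factors equal to $\kappa$ and the common bound on the factors' functional tightness equal to $\mu$; it yields $t_0(X^\kappa)=t_0\left(\prod_{\alpha<\kappa}X_\alpha\right)\le\mu=t_0(X)$.

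The one point that requires care---and the only place where the measurable-cardinal hypothesis is genuinely used---is keeping the two cardinals appearing in Theorem \ref{main2} distinct: the cardinal $\kappa$ of the present corollary is the \emph{number of factors}, and it is precisely this cardinal that must remain below the first measurable, whereas the bound on the functional tightness of each factor is the separate cardinal $\mu=t_0(X)$. I expect no further estimates beyond this bookkeeping; once the roles are correctly matched the two inequalities combine to give $t_0(X)=t_0(X^\kappa)$, which together with the normality reductions completes the proof.
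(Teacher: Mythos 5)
Your proof is correct and follows essentially the same route as the paper: the refined corollary is the proof of Corollary \ref{AnswerOkunev} with Theorem \ref{main2} substituted for Theorem \ref{main}, i.e.\ the outer equalities from $t_m=t_0$ on compact (normal) spaces, $t_0(X)\geq t_0(X^\kappa)$ from Theorem \ref{main2} applied to $\kappa$ copies of $X$, and the reverse inequality from the fact that quotient maps (here a coordinate projection) cannot raise functional tightness \cite[Proposition 3]{A}. Your explicit bookkeeping separating the number of factors $\kappa$ from the tightness bound $\mu=t_0(X)$ is exactly the right point to be careful about and matches the paper's intent.
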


Combining Theorems \ref{main} and \ref{main2} we obtain:
\begin{cor}\label{main1}
Let $\{X_\alpha:\alpha<\lambda\}$ be a family of compact spaces
such that $t_0(X_\alpha)\leq \kappa$. If $\lambda \leq 2^\kappa$ or $\lambda$ is less than the first measurable cardinal,
then $t_0\left( \prod_{\alpha<\lambda} X_\alpha \right)\leq \kappa$.
\end{cor}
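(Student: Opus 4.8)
The plan is to treat the two hypotheses separately, since they correspond exactly to the two theorems already established. In both cases the conclusion and the structural assumptions (compactness of each $X_\alpha$ together with $t_0(X_\alpha)\leq\kappa$) are identical, so all that is required is to invoke the appropriate theorem and to take care of one minor bookkeeping point in the first case.

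First I would suppose $\lambda\leq 2^\kappa$. If $\lambda=2^\kappa$ this is precisely Theorem \ref{main}. For $\lambda<2^\kappa$ I would reduce to that case by padding the family: for each index $\lambda\leq\alpha<2^\kappa$ adjoin a one-point space $X_\alpha=\{*\}$, which is compact and satisfies $t_0(X_\alpha)=\omega\leq\kappa$. The projection onto the first $\lambda$ coordinates is then a homeomorphism from $\prod_{\alpha<2^\kappa}X_\alpha$ onto $\prod_{\alpha<\lambda}X_\alpha$, because every adjoined factor is a singleton; hence Theorem \ref{main} applied to the enlarged family of $2^\kappa$ compacta yields $t_0\left(\prod_{\alpha<\lambda}X_\alpha\right)\leq\kappa$. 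Alternatively one may simply observe that the only place where the cardinality of the index set enters the proof of Theorem \ref{main} is through the Hewitt--Marczewski--Pondiczery estimate $d(Y)\leq\kappa$ in Lemma \ref{2-continuity}; since a product of at most $2^\kappa$ two-point spaces still has density $\leq\kappa$, the entire argument goes through verbatim when the number of factors is at most $2^\kappa$.

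Second I would suppose instead that $\lambda$ is less than the first measurable cardinal. This is exactly the hypothesis of Theorem \ref{main2}, so that theorem delivers $t_0\left(\prod_{\alpha<\lambda}X_\alpha\right)\leq\kappa$ with nothing further to verify. Since the two cases together exhaust the disjunction in the hypothesis, the corollary follows.

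I do not anticipate any genuine obstacle here: the entire difficulty resides in the two underlying theorems, and ultimately in Lemmas \ref{2-continuity} and \ref{sigma-continuity}, Noble's Theorem \ref{Noble}, and Uspenskii's Theorem \ref{cubes}. The only point requiring a word of care is the passage from $\lambda<2^\kappa$ to the exact exponent $2^\kappa$ demanded by the statement of Theorem \ref{main}, and this the padding argument (or the remark on the role of density) handles cleanly.
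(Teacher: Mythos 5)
Your proposal is correct and follows exactly the paper's route: the paper derives Corollary \ref{main1} simply by combining Theorems \ref{main} and \ref{main2}, which is the case split you perform. Your padding argument (adjoining singleton factors to reach index set $2^\kappa$, or noting that the Hewitt--Marczewski--Pondiczery bound in Lemma \ref{2-continuity} already covers any $\lambda\leq 2^\kappa$) merely makes explicit a bookkeeping step the paper leaves implicit, and both versions of it are sound.
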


\begin{xrem}
The non-measurability of $\lambda$ in the formulation of Theorem \ref{main2} cannot be eliminated.
Indeed, assume that $\lambda$ is measurable and let $\mu$ be a two-valued nontrivial measure on $\lambda$. Identifying subsets of $\lambda$ with
elements of the Cantor cube $2^\lambda$ (via the map $A\mapsto \chi_A$)
we can treat $\mu$ as a function $\mu:2^\lambda\to \{0,1\}$. It is not difficult to check that this function is
$\omega$-continuous but not continuous, cf. \cite[Theorem 4.1]{Pl1} and remark following Corollary 3 in \cite{U}.
\end{xrem}

\subsection*{Acknowledgements}
I should like to thank Witold Marciszewski and Grzegorz Plebanek for several valuable comments.

\end{document}